\theoremstyle{plain}
\newtheorem{thm}{Theorem}
\newtheorem{lem}[thm]{Lemma}
\theoremstyle{definition}
\theoremstyle{remark}
\newtheorem{rem}[thm]{Remark}
\begin{document}
\baselineskip=19pt
\title{Generators for the mapping class group of a nonorientable surface}
\author{Susumu Hirose}
\address{Department of Mathematics,  
Faculty of Science and Technology, 
Tokyo University of Science, 
Noda, Chiba, 278-8510, Japan} 
\email{hirose\b{ }susumu@ma.noda.tus.ac.jp}
\thanks{This research was supported by Grant-in-Aid for 
Scientific Research (C) (No. 16K05156), 
Japan Society for the Promotion of Science. }
\begin{abstract} 
We show that Szepietowski's system of generators  
for the mapping class group 
of a non-orientable surface is a minimal generating set 
by Dehn twists and $Y$-homemorphisms. 
\end{abstract}
\maketitle

Let $N_g$ be a non-orientable surface which is a connected sum of $g$ 
projective planes. 
Let $\mathcal{M}(N_g)$ be the group of isotopy classes of homeomorphisms 
over $N_g$, i.e., the {\em mapping class group\/} of $N_g$. 
In this paper, we assume that $g \geq 4$. 

We introduce some elements of $\mathcal{M}(N_g)$. 
A simple closed curve $\gamma_1$ (resp. $\gamma_2$) in $N_g$ 
is {\em two-sided\/} (resp. {\em one-sided\/}) 
if a regular neighborhood of $\gamma_1$ (resp. $\gamma_2$) 
is an annulus (resp. M\"{o}bius band).  
For a two-sided simple closed curve $\gamma$ on $N_g$,
we denote by $t_{\gamma}$ a Dehn twist about $\gamma$. 
We indicate the direction of a Dehn twist by an arrow beside the curve 
$\gamma$ as shown in Figure \ref{fig:Dehn-twist}. 
For a one-sided simple closed curve $m$ and a two-sided 
simple closed curve $a$ which intersect transversely in one point, 
let $K \subset N_g$ be a regular neighborhood of $m \cup a$, 
which is homeomorphic to the Klein bottle with one boundary component. 
Let $M$ be a regular neighborhood of $m$. 
We denote by $Y_{m,a}$ a homeomorphism over $N_g$ which is described 
as the result of pushing $M$ once along $a$ keeping 
the boundary of $K$ fixed 
(see Figure~\ref{fig:Y-homeo}). 
We call $Y_{m,a}$ a $Y$-homeomorphism. 
\begin{figure}[hbtp]
\includegraphics[height=2cm]{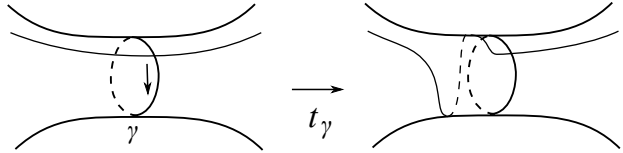}
\caption{The direction of $t_\gamma$ is indicated by an arrow beside $\gamma$. }
\label{fig:Dehn-twist}
\end{figure}
\begin{figure}[hbtp]
\includegraphics[height=4cm]{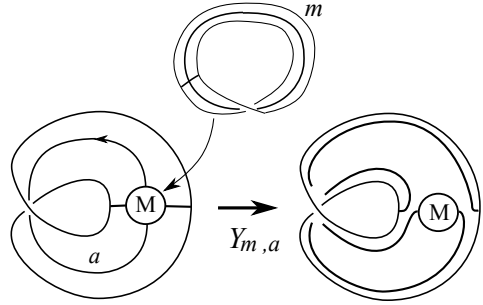}
\caption{ A cirlcle with ``M" indicates a place where to attach a M\"obius band.}
\label{fig:Y-homeo}
\end{figure}

Lickorish showed that $\mathcal{M}(N_g)$ is generated by 
Dehn twists and $Y$-homeomorphisms \cite{Lickorish1}, and 
that $\mathcal{M}(N_g)$ is not generated by Dehn twists \cite{Lickorish2}. 
Furthermore, Chillingworth \cite{Chillingworth} found a finite system of 
generators for $\mathcal{M}(N_g)$. 
Birman and Chillingworth \cite{BC} obtained a finite system of generators 
by using an argument on the orientable two fold covering of $N_g$. 
Szepietowski \cite{Szepietowski1} reduced the system of Chillingworth's generators 
for  $\mathcal{M}(N_g)$ and showed: 
\begin{figure}[hbtp]
\includegraphics[height=6.5cm]{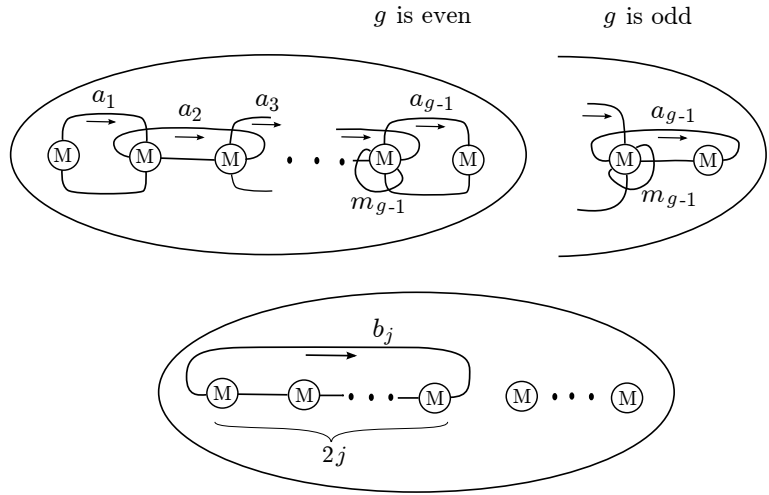}
\caption{Chillingworth's generators for $\mathcal{M}(N_g)$. }
\label{fig:generator}
\end{figure}

\begin{thm}\cite[Theorem 3.1]{Szepietowski1}
$\mathcal{M}(N_g)$ is generated by $t_{a_i}$ $(i=1, \ldots, g-1)$, 
$t_{b_2}$ and $Y_{m_{g-1}, a_{g-1}}$, 
where $a_i, m_{g-1}, b_2$ are simple closed curves shown in 
Figure \ref{fig:generator}.  
\end{thm}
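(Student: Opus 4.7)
The plan is to start from Chillingworth's finite generating set---Dehn twists along all the $a_i$, $b_j$, and $e$ curves of Figure~\ref{fig:generator} together with the $Y$-homeomorphisms $Y_{m_i,a_i}$ for $i=1,\dots,g-1$---and show that every element of this larger set lies in the subgroup $H$ generated by the proposed list $\{t_{a_1},\dots,t_{a_{g-1}},t_{b_2},Y_{m_{g-1},a_{g-1}}\}$. The tool throughout is the conjugation rule $f\, t_c\, f^{-1}=t_{f(c)}^{\pm 1}$, with the analogous statement $f\, Y_{m,a}\, f^{-1}=Y_{f(m),f(a)}^{\pm 1}$ for $Y$-homeomorphisms. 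Accordingly, for each Chillingworth generator outside the proposed list it suffices to realize a conjugating element $f\in H$ that moves its supporting curve (or curve pair) to one already present as a generator.

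First I would handle the remaining Dehn twists $t_{b_j}$ for $j\neq 2$ and $t_e$. The curves $a_1,\dots,a_{g-1},b_2$ form a Humphries-type chain inside an orientable subsurface of $N_g$, so standard braid relations among these twists yield, for each of the $b_j$ and $e$, an element of $\langle t_{a_i},t_{b_2}\rangle$ carrying $b_2$ to that curve. Conjugating $t_{b_2}$ by that element produces the desired twist inside $H$.

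Next I would address the $Y$-homeomorphisms $Y_{m_i,a_i}$ with $i<g-1$. The goal is to construct $f\in H$ with $f(m_{g-1})=m_i$ and $f(a_{g-1})=a_i$; then $Y_{m_i,a_i}=f\, Y_{m_{g-1},a_{g-1}}\, f^{-1}\in H$. Such an $f$ is to be assembled from Dehn twists already known to lie in $H$: twists about the $a_j$ slide a regular neighborhood of $m_{g-1}$ along the chain, while the twists $t_{b_j}$ produced in the first step reposition the transverse companion $a_{g-1}$ so that the resulting pair is isotopic to $(m_i,a_i)$.

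The main obstacle is this last step: whereas the Dehn-twist reduction follows a well-established Humphries-style pattern on the orientable subsurface, verifying that the chosen generators suffice to move $(m_{g-1},a_{g-1})$ to every $(m_i,a_i)$ requires careful tracking of how two-sided twists act on one-sided curves through successive positions. Organizing this diagrammatic argument---ensuring that all the one-sided curves $m_i$ lie in a single $H$-orbit together with their designated transverse two-sided partners---is the technical core of the reduction.
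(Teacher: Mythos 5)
This theorem is quoted from \cite{Szepietowski1}; the present paper gives no proof of it, so there is no internal argument to compare against. Your sketch does follow the strategy of the cited source: start from Chillingworth's generating set and absorb each generator into the subgroup $H$ via the conjugation rules $f t_c f^{-1}=t_{f(c)}^{\pm1}$ and $f Y_{m,a} f^{-1}=Y_{f(m),f(a)}^{\pm1}$. That architecture is sound, and your identification of the two reduction tasks (the extra Dehn twists, then the crosscap slides) is the right decomposition.

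The genuine gap is in your first step. The claim that ``standard braid relations among these twists yield, for each of the $b_j$ and $e$, an element of $\langle t_{a_i},t_{b_2}\rangle$ carrying $b_2$ to that curve'' is precisely the point that has to be proved, and braid relations cannot deliver it. The subgroup generated by the chain twists $t_{a_1},\dots,t_{a_{g-1}}$ alone is the image of a braid group (a hyperelliptic-type subgroup of the mapping class group of the orientable subsurface, proper once the genus of that subsurface is at least $3$), and the fact that adjoining the single twist $t_{b_2}$ yields the full mapping class group of that subsurface is exactly Humphries' theorem \cite{Humphries}, whose proof requires the lantern relation (or an equivalent non-braid relation) in an essential way. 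So you must either invoke Humphries' theorem for the orientable subsurface spanned by $a_1,\dots,a_{g-1},b_2$ and then apply change of coordinates \emph{inside that subsurface} to carry $b_2$ to each $b_j$ (verifying the curves are topologically equivalent there, and separately handling $e$ if it does not lie in that subsurface), or reproduce the inductive lantern-relation computation; calling either route ``braid relations'' papers over the core of the argument. Two smaller remarks: depending on how Chillingworth's set is normalized (Figure \ref{fig:generator} displays only the single one-sided curve $m_{g-1}$), it may contain only the one crosscap slide $Y_{m_{g-1},a_{g-1}}$, in which case your third step is vacuous; and in any case a fully worked version of your step for the $Y_{m_i,a_i}$ must actually exhibit the conjugating element $f\in H$ with $f(m_{g-1})=m_i$ and $f(a_{g-1})=a_i$ rather than only assert that one should exist, since the existence of such an $f$ in $\mathcal{M}(N_g)$ (change of coordinates) does not by itself place it in $H$.
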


On the other hand, Lickorish \cite{Lickorish} showed that 
the mapping class group $\mathcal{M}(\Sigma_g)$ of the orientable 
closed surface $\Sigma_g$ of genus $g$ is generated by finitely many Dehn twists, 
and Humphries \cite{Humphries} reduced the number of Dehn twists 
generating $\mathcal{M}(\Sigma_g)$ to $2g+1$ and 
showed that this is the minimum 
number of Dehn twists generating $\mathcal{M}(\Sigma_g)$. 
We will show the analogous result for the mapping class group of the non-orientable surface. 

\begin{thm} \label{thm:minimal}
We assume  $g\geq 4$. 
If Dehn twists $t_{c_1}, \ldots, t_{c_n}$ and 
$Y$-homeomorphisms $Y_1, \ldots, Y_k$ 
generate $\mathcal{M}(N_g)$, then $n \geq g$ and $k \geq 1$. 
In particular,  any proper subset of $\{  t_{a_i}  (i=1, \ldots, g-1),  t_{b_2}, Y_{m_{g-1}, a_{g-1}} \}$ 
does not generate $\mathcal{M}(N_g)$. 
\end{thm}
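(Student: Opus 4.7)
The bound $k \geq 1$ is immediate from Lickorish's result \cite{Lickorish2} that Dehn twists alone generate a proper subgroup of $\mathcal{M}(N_g)$; any generating set by Dehn twists and $Y$-homeomorphisms must therefore contain at least one $Y$-homeomorphism.

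For $n \geq g$, my plan is to exploit the mod-$2$ homology representation $\rho\colon \mathcal{M}(N_g) \to \mathrm{GL}(V)$, where $V := H_1(N_g;\mathbb{F}_2) \cong \mathbb{F}_2^g$, together with the mod-$2$ intersection pairing $v \cdot w$ on $V$. The first observation is that every $Y$-homeomorphism acts trivially via $\rho$: on integer homology $Y_{m,a}$ sends $[m]$ to $-[m]$ (reversing the M\"obius-band core) and fixes everything else, and $-[m] \equiv [m] \pmod 2$. Hence the transvections $\rho(t_{c_i})\colon v \mapsto v + (v \cdot [c_i])\,[c_i]$, whose axes $[c_i]$ lie in the isotropic hyperplane $V_0 := \{v \in V : v \cdot v = 0\}$ (because each $c_i$ is two-sided), must by themselves generate the image $G := \rho(\mathcal{M}(N_g))$.

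A straightforward span argument already yields the preliminary bound $n \geq g-1$: for every $\varphi$ in the subgroup generated by the $\rho(t_{c_i})$'s one has $\mathrm{im}(\varphi - I) \subseteq W := \mathrm{span}([c_1],\ldots,[c_n])$; since this subgroup equals $G$ and $G$ contains $T_v$ for every nonzero isotropic $v$, it follows that $W \supseteq V_0$, hence $W = V_0$, and so $n \geq g-1$. The hard part is to sharpen this by one. My plan is to pin down $G$ precisely as a subgroup of the orthogonal group over $\mathbb{F}_2$ (likely the stabilizer of the characteristic class $\omega = \sum [\alpha_i]$) and then establish the combinatorial lemma that no $g-1$ transvections with isotropic axes generate $G$. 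A representative obstruction: if all axes are of ``transposition type'' $\alpha_r + \alpha_s$ in a crosscap basis, the generated group is only the symmetric group $S_g \subsetneq G$; mixing in heavier axes $\alpha_{i_1} + \cdots + \alpha_{i_{2k}}$ produces direct-product-type proper subgroups. Proving this group-theoretic lemma rigorously, so as to extract the final $+1$ beyond the crude homology bound, is the principal obstacle of the proof.
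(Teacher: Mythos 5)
Your argument for $k\ge 1$ is exactly the paper's (Lickorish's theorem that Dehn twists generate a proper subgroup), and your span argument for the preliminary bound $n\ge g-1$ is a correct and slightly more linear-algebraic variant of the paper's Lemma \ref{lem:lowerbound}: you get $\mathrm{span}([c_1],\dots,[c_n])\supseteq H_1^+(N_g;\mathbb{Z}_2)$ from $\mathrm{im}(\varphi-I)\subseteq W$ together with the fact that every nonzero class in $H_1^+$ is realized by a two-sided curve whose twist lies in the group. (One caveat: to know that the transvection $\tau_v$ actually lies in the image $G$ for \emph{every} nonzero $v\in V_0$, including $x_1+\cdots+x_g$ when $g$ is even, you should check that such $v$ is represented by a two-sided simple closed curve; this is where the paper's Lemmas \ref{lem:non-admissible} and \ref{lem:one-admissible} do some work.)

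The decisive step, however, is missing. You state yourself that the sharpening from $n\ge g-1$ to $n\ge g$ rests on a ``combinatorial lemma that no $g-1$ transvections with isotropic axes generate $G$,'' and you offer only a plan and two heuristic examples (axes of transposition type generating $S_g$, ``heavier axes'' giving ``direct-product-type'' subgroups); this is a conjecture, not a proof, and the examples do not cover a general configuration of $g-1$ axes. The paper closes this gap with a concrete invariant: if $n=g-1$ then $[c_1],\dots,[c_{g-1}]$ form a basis of $H_1^+(N_g;\mathbb{Z}_2)$, and one defines a $\mathbb{Z}_4$-quadratic form $q$ (i.e.\ $q(x+y)=q(x)+q(y)+2\times(x,y)$) by $q([c_i])=2$ and $q([\alpha])=1$ for a one-sided $\alpha$ completing the basis. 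A direct computation (Lemma \ref{invariant-quadratic-form}) shows each $(t_{c_i})_*$ preserves $q$, the $Y$-homeomorphisms preserve $q$ trivially, so $q$ would be $\mathcal{M}(N_g)$-invariant; Lemma \ref{lem:no-quadratic-form} then exhibits, for any such $q$, a two-sided class $x$ with $q(x)=0$ and a $y$ with $(y,x)=1$, whence $q\circ(t_\gamma)_*(y)=q(y)+2\neq q(y)$, a contradiction. Your program of identifying $G$ as the stabilizer of $w_1(=\sum x_i)$ and classifying subgroups generated by $g-1$ isotropic transvections would, if carried out, essentially rediscover this quadratic-form obstruction, but as written the proposal does not prove the main inequality $n\ge g$.
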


\begin{rem}
When $g=1$, $\mathcal{M}(N_g)$ is trivial. 
When $g=2$, $\mathcal{M}(N_2) \cong \mathbb{Z}_2 \times \mathbb{Z}_2$ 
and generated by $t_{a_1}$ and $Y_{m_1, a_1}$ (see \cite[Lemma 5]{Lickorish1}), 
therefore $\{ t_{a_1}, Y_{m_1, a_1} \}$ is a minimal generating set by Dehn twists 
and $Y$-homeomorphisms. 
When $g=3$, $\mathcal{M}(N_3)$ 
is generated by $t_{a_1}, t_{a_2}$ and $Y_{m_2, a_2}$
(see \cite[Theorem 3]{BC} and \cite[Theorem 3.1]{Szepietowski1}). 
If $\mathcal{M}(N_3)$ is generated by one Dehn twist $t_a$ and one $Y$-homeomorphism, 
then the group of the action of $\mathcal{M}(N_3)$ on $H_1(N_3;\mathbb{Z}_2)$ 
should be isomorphic to $\mathbb{Z}_2$ generated by the induced isomorphism 
$(t_a)_*$ on $H_1(N_3;\mathbb{Z}_2)$. 
Nevertheless, $(t_{a_1})_*$ is not equal to  $(t_{a_2})_*$. 
Therefore $\{ t_{a_1}, t_{a_2}, Y_{m_2, a_2}\}$ is 
a minimal generating set by Dehn twists 
and $Y$-homeomorphisms.  \end{rem}

Let $w_1 : H_1(N_g;\mathbb{Z}_2) \to \mathbb{Z}_2$ be 
the first Stiefel-Whitney class, that is to say, 
if $x \in H_1(N_g;\mathbb{Z}_2)$ is represented by a one-sided simple 
closed curve on $N_g$ then $w_1(x) = 1$, 
otherwise $w_1(x) = 0$. 
For the basis $\{ x_1, \ldots, x_g \}$ for $H_1(N_g ; \mathbb{Z}_2)$ 
indicated in Figure \ref{fig:basis}, $w_1(x_i) = 1$. 
For each pair of elements $x$, $y$ of $H_1(N_g; \mathbb{Z}_2)$, 
the $\mathbb{Z}_2$-intersection form of $x$ and $y$ is denoted by  $(x,y)$. 
For the basis $\{ x_1, \ldots, x_g \}$, $(x_i, x_j) = \delta_{i,j}$.  
Let $H_1^+(N_g;\mathbb{Z}_2)$ be the kernel of $w_1$, 
then $\dim_{\mathbb{Z}_2}   H_1^+(N_g;\mathbb{Z}_2) = g-1$. 
If a complement of a two-sided simple closed curve $c$ on $N_g$ is connected 
and non-orientable, we call $c$ an {\em admissible} $A$-circle. 
For an admissible $A$-circle $c$ on $N_g$, 
$N_g \setminus c$ is homeomorphic to $N_{g-2}$ removed two 2-disks. 
Therefore, if $c_1$ and $c_2$ are admissible $A$-circles then 
there is $\phi \in \mathcal{M}(N_g)$ such that $\phi(c_1) = c_2$, 
by the change of coordinates principle in \cite[\S 1.3]{Farb-Margalit}.  

\begin{figure}[hbtp]
\includegraphics[height=3.5cm]{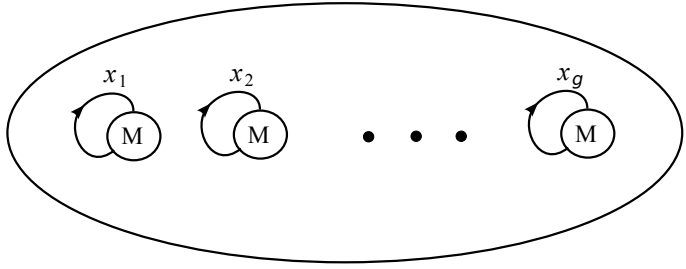}
\caption{A basis for $H_1(N_g ; \mathbb{Z}_2)$. }
\label{fig:basis}
\end{figure}

\begin{lem} \label{lem:non-admissible}
Let $c$ be a two-sided simple closed curves on $N_g$. 
If $c$ is not admissible, then $c$ represent $0$ or $x_1 + \cdots + x_g$ 
in $H_1(N_g;\mathbb{Z}_2)$. 
\end{lem}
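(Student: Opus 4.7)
The plan is to split into cases according to whether $c$ separates $N_g$.

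If $c$ separates $N_g$, then $c$ bounds a subsurface on each side, hence $[c]=0$ in $H_1(N_g;\mathbb{Z}_2)$, and the conclusion holds with value $0$.

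Otherwise $c$ is non-separating, so $N_g\setminus c$ is connected, and since $c$ is not admissible $N_g\setminus c$ must be orientable. I would then show $[c]=x_1+\cdots+x_g$ by the following orientation-flux argument. Fix an orientation $\omega$ of $N_g\setminus c$. Because $c$ is two-sided it has an annular tubular neighborhood $c\times(-\varepsilon,\varepsilon)$, and because $N_g$ is not orientable the two restrictions of $\omega$ to the half-annuli on either side of $c$ cannot match through the product structure: any such matching would extend $\omega$ to an orientation of all of $N_g$. Hence crossing $c$ transversely reverses the local orientation. Consequently, for any loop $\gamma$ in general position with $c$, the loop $\gamma$ is orientation-reversing in $N_g$ (equivalently, one-sided) if and only if $|\gamma\cap c|$ is odd. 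Passing to $\mathbb{Z}_2$-homology, this reads
\[
w_1([\gamma])=([\gamma],[c])\qquad\text{for every }[\gamma]\in H_1(N_g;\mathbb{Z}_2).
\]
Evaluating on the basis $\{x_1,\ldots,x_g\}$ and using $w_1(x_i)=1$ together with $(x_i,x_j)=\delta_{i,j}$ forces $[c]=x_1+\cdots+x_g$.

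The step to be careful with is the orientation-extension assertion: one must verify that, since $c$ has an annular neighborhood and $N_g\setminus c$ is connected, the only obstruction to extending $\omega$ across $c$ is a single $\mathbb{Z}_2$-ambiguity along $c$, and an extension would violate non-orientability of $N_g$. Once this is in place, the identity $w_1(\gamma)\equiv|\gamma\cap c|\pmod 2$ and the linear-algebra step identifying $[c]$ as $x_1+\cdots+x_g$ are routine. (Note that when $g$ is odd the class $x_1+\cdots+x_g$ has $w_1=1$, so the second case cannot occur; this is consistent with the fact that for $g$ odd no two-sided non-separating curve can have orientable complement, by an Euler-characteristic count.)
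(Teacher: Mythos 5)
Your proof is correct, but it takes a genuinely different route from the paper's. For the non-separating case the paper invokes the change of coordinates principle: since $N_g\setminus c$ is connected and orientable, $g$ must be even and some homeomorphism of $N_g$ carries $c$ to the standard curve $b_{g/2}$, which represents $x_1+\cdots+x_g$; one then uses the fact that every mapping class preserves the class $x_1+\cdots+x_g$. You instead compute $[c]$ directly, showing that orientation transport across $c$ must flip (else an orientation of $N_g\setminus c$ would extend over the annular neighborhood), hence $w_1(y)=(y,[c])$ for all $y$, and nondegeneracy of the intersection form pins down $[c]=x_1+\cdots+x_g$. The two arguments rest on the same underlying algebra --- $x_1+\cdots+x_g$ is the characteristic element of the $\mathbb{Z}_2$-intersection form, i.e.\ the Poincar\'e dual of $w_1$ --- but yours avoids both the classification-of-surfaces input and the separate verification that $[b_{g/2}]=x_1+\cdots+x_g$, making it more self-contained; the paper's version is shorter because it delegates the topology to a standard citation. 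The one step you rightly flag, that agreement of the orientations on both half-annuli would glue to a global orientation of $N_g$, is indeed the crux and holds exactly as you state it (the overlap $A\setminus c$ has two connected components, each carrying a single $\mathbb{Z}_2$ of ambiguity), and your parenthetical consistency check for odd $g$ is also correct.
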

\begin{proof} 
If $c$ is not admissible, then either $N_g \setminus c$ is not connected 
or $N_g \setminus c$ is connected and orientable. 
In the former case, $c$ is $0$ in $H_1(N_g;\mathbb{Z}_2)$.  
In the latter case, $g$ is even, and 
there is a homeomorphism which brings $c$ to $b_{g/2}$ in Figure \ref{fig:generator}, 
since their complements are homeomorphic to $\Sigma_{g/2-1}$ removed two $2$-disks. 
The simple closed curve $b_{g/2}$ represents $x_1 + \cdots + x_g$ and 
the action of any homeomorphism of $N_g$ on $H_1(N_g;\mathbb{Z}_2)$ 
preserves $x_1 + \cdots + x_g$. 
Therefore $c$ represents $x_1 + \cdots + x_g$.  
\end{proof}

\begin{lem} \label{lem:one-admissible}
If $t_{c_1}, \ldots, t_{c_n}$ and $Y$-homeomorphisms 
$Y_1, \ldots, Y_k$ generate 
$\mathcal{M}(N_g)$, then at least one of $c_1, \ldots, c_n$ 
is admissible. 
\end{lem}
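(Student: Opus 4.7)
The plan is to exhibit a quotient action of $\mathcal{M}(N_g)$ on which every non-admissible Dehn twist and every $Y$-homeomorphism acts trivially, but on which some mapping class does not. The natural candidate is the induced action on $V := H_1(N_g;\mathbb{Z}_2)/\langle v\rangle$, where $v = x_1 + \cdots + x_g$. This is well-defined because $v$ is characterized by the identity $(v, y) = w_1(y)$ for every $y \in H_1(N_g;\mathbb{Z}_2)$, and both $w_1$ and the intersection form are preserved by every homeomorphism, so $v$ is $\mathcal{M}(N_g)$-invariant.

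Assume for contradiction that none of $c_1, \ldots, c_n$ is admissible. By Lemma \ref{lem:non-admissible}, $[c_i] \in \{0, v\} = \langle v\rangle$ for each $i$. Since a Dehn twist acts on $H_1(N_g;\mathbb{Z}_2)$ by the transvection $(t_c)_*(y) = y + (y, [c])[c]$, each $(t_{c_i})_*(y) - y$ lies in $\langle v\rangle$, so $(t_{c_i})_*$ is the identity on $V$. For the $Y$-homeomorphisms, the plan is to verify that each $(Y_{m,a})_*$ is trivial on all of $H_1(N_g;\mathbb{Z}_2)$. Writing $K = N(m \cup a)$ for the Klein-bottle-with-hole support, one first checks that $[\partial K] = 0$ in the $\mathbb{Z}_2$-homology of both $K$ and $N_g \setminus K$, so Mayer--Vietoris splits $H_1(N_g;\mathbb{Z}_2) \cong H_1(K;\mathbb{Z}_2) \oplus H_1(N_g \setminus K;\mathbb{Z}_2)$. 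Then $Y_{m,a}$ is the identity on the second summand by support, and on the first summand $\langle [m], [a]\rangle$ it fixes $[m]$ (since $M = N(m)$ is preserved setwise) and $[a]$ (by a direct isotopy analysis of the push of $M$ once along the two-sided loop $a$). Consequently the subgroup $\langle t_{c_1},\ldots,t_{c_n}, Y_1, \ldots, Y_k\rangle$ acts trivially on $V$.

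To derive the contradiction, pick any admissible $A$-circle $c$ in $N_g$ (available for $g \geq 4$). Then $[c] \neq 0$ by admissibility and $[c] \neq v$ by Lemma \ref{lem:non-admissible}, so $[c]$ is a nonzero element of $V$. Nondegeneracy of the mod-$2$ intersection form (its Gram matrix in the basis $\{x_i\}$ is the identity) supplies some $y$ with $(y, [c]) = 1$, and then $(t_c)_*(y) = y + [c] \neq y$ in $V$. Hence $t_c \in \mathcal{M}(N_g)$ does not lie in the subgroup generated by the $t_{c_i}$ and $Y_j$, contradicting the hypothesis. The main technical obstacle will be the $Y$-homeomorphism computation, specifically verifying triviality of $(Y_{m,a})_*$ on $H_1(K;\mathbb{Z}_2)$; everything else reduces to linear algebra on the $\mathbb{F}_2$-vector space $H_1(N_g;\mathbb{Z}_2)$.
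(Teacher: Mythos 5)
Your argument is essentially the paper's proof in quotient language: the paper observes that under your hypothesis every element of $\mathcal{M}(N_g)$ would act on $H_1(N_g;\mathbb{Z}_2)$ as a power of the transvection $\tau_v$ along $v=x_1+\cdots+x_g$, whereas $(t_{a_1})_*=\tau_{x_1+x_2}$ is not such a power; passing to $V=H_1(N_g;\mathbb{Z}_2)/\langle v\rangle$ is just a clean way of saying the same thing, and the triviality of $Y$-homeomorphisms on $\mathbb{Z}_2$-homology is quoted in the paper as a known fact rather than reproved via Mayer--Vietoris. One small repair: your step ``$[c]\neq v$ by Lemma \ref{lem:non-admissible}'' uses the converse of that lemma, which is not what it asserts (it says non-admissible implies $[c]\in\{0,v\}$, not the reverse). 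The cleanest fix is to take $c=a_1$ explicitly, since $[a_1]=x_1+x_2\notin\{0,v\}$ for $g\geq 3$; alternatively, note that any admissible $A$-circle is $\phi(a_1)$ for some $\phi$ by change of coordinates, and since $\phi_*$ preserves $v$, the equality $[c]=v$ would force $x_1+x_2=v$, impossible for $g\geq 3$. With that adjustment the proof is correct.
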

\begin{proof}
For $y \in H_1(N_g;\mathbb{Z}_2)$, 
we define an isomorphism $\tau_y$ of  $H_1(N_g;\mathbb{Z}_2)$ by 
$\tau_y (x) = x + (x,y) y$. 
By Lemma \ref{lem:non-admissible} and the fact that $Y$-homeomorphisms acts 
on $H_1(N_g;\mathbb{Z}_2)$ trivially, 
if $c_1, \ldots, c_n$ are not admissible, then 
the action of each elements of $\mathcal{M}(N_g)$ on $H_1(N_g;\mathbb{Z}_2)$ is  
a power of $\tau_{x_1 + \cdots + x_g}$. 
On the other hand, $(t_{a_1})_* = \tau_{x_1 + x_2}$ is not a power of  
$\tau_{x_1 + \cdots + x_g}$. 
\end{proof}

\begin{lem} \label{lem:lowerbound}
If $t_{c_1}, \ldots, t_{c_n}$ and $Y$-homeomorphisms 
$Y_1, \ldots, Y_k$ generate 
$\mathcal{M}(N_g)$ then $[c_1], \ldots, [c_n]$ generate $H_1^+(N_g;\mathbb{Z}_2)$.  
In particular, $n \geq g-1$. 
\end{lem}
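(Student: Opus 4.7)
The plan is to define $V := H_1^+(N_g;\mathbb{Z}_2)$, which has dimension $g-1$, and the subspace $W := \mathrm{span}_{\mathbb{Z}_2}([c_1],\ldots,[c_n])$; since each $c_i$ is two-sided, $w_1([c_i])=0$, so $W \subseteq V$. The goal reduces to showing $W = V$, which would give $n \geq \dim V = g-1$.

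First I would observe that each generator $\phi$ of the given generating set satisfies $\phi_*(x) - x \in W$ for every $x \in H_1(N_g;\mathbb{Z}_2)$: for $\phi = t_{c_i}$ we have $\phi_*(x) - x = (x,[c_i])[c_i] \in W$, and for $\phi = Y_j$ the action is trivial by the fact recalled in the proof of Lemma~\ref{lem:one-admissible}. Next, I would verify that the set
\[
\Gamma \;:=\; \{\phi \in \mathrm{Aut}(H_1(N_g;\mathbb{Z}_2)) \,:\, \phi(x) - x \in W \text{ for all } x\}
\]
is a subgroup of $\mathrm{Aut}(H_1(N_g;\mathbb{Z}_2))$. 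Closure under composition is immediate from the splitting $(\phi\psi)(x)-x = (\phi(\psi(x))-\psi(x)) + (\psi(x)-x)$, and closure under inversion follows by substituting $y = \phi^{-1}(x)$ so that $\phi^{-1}(x)-x = y - \phi(y) \in W$. Since all generators land in $\Gamma$ and $\Gamma$ is a subgroup, the entire image of $\mathcal{M}(N_g)$ in $\mathrm{Aut}(H_1(N_g;\mathbb{Z}_2))$ lies in $\Gamma$.

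I would then apply this constraint to each $t_{a_i}$, $i = 1,\ldots,g-1$, which certainly lies in $\mathcal{M}(N_g)$. Reading off Figures~\ref{fig:generator} and \ref{fig:basis} gives $[a_i] = x_i + x_{i+1}$, so the inclusion $(t_{a_i})_*(x) - x \in W$ specializes to $(x,x_i+x_{i+1})(x_i+x_{i+1}) \in W$ for every $x$. Plugging in $x = x_i$, for which the pairing equals $1$, forces $x_i + x_{i+1} \in W$ for each $i = 1,\ldots,g-1$. The set $\{x_1+x_2, x_2+x_3,\ldots,x_{g-1}+x_g\}$ consists of $g-1$ linearly independent elements of $V$, hence is a basis; so $W = V$, and consequently $n \geq g-1$.

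The main obstacle, I expect, is the conceptual setup: rather than tracking individual automorphisms one needs to package the condition ``$\phi_*$ differs from the identity by something in $W$'' into a subgroup $\Gamma$, so that the property can be promoted from the generating set to every element of $\mathcal{M}(N_g)$. Once $\Gamma$ is identified, its closure properties are routine and the comparison with the $t_{a_i}$'s closes the argument with almost no further work.
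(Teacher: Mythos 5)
Your argument is correct, and it takes a genuinely different route from the paper's. Both proofs share the basic algebraic observation that, modulo the subspace $W=\mathrm{span}([c_1],\ldots,[c_n])$, every generator (hence, by closure under composition and inversion, every element) of $\mathcal{M}(N_g)$ acts as the identity on $H_1(N_g;\mathbb{Z}_2)$. They diverge in how they force $W$ to be all of $H_1^+(N_g;\mathbb{Z}_2)$. The paper argues geometrically: it first invokes Lemma \ref{lem:one-admissible} to ensure some $c_1$ is an admissible $A$-circle, writes an arbitrary class of $H_1^+$ as a sum of classes of admissible $A$-circles $\gamma_j$, and uses the change of coordinates principle to realize each $\gamma_j$ as $\phi_j(c_1)$ for some $\phi_j\in\mathcal{M}(N_g)$, so that $[\gamma_j]=(\phi_j)_*([c_1])\in [c_1]+W=W$. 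You instead apply the ``identity mod $W$'' constraint to the concrete mapping classes $t_{a_i}$, whose induced maps are the transvections $\tau_{x_i+x_{i+1}}$ (as the paper itself records for $t_{a_1}$ in the proof of Lemma \ref{lem:one-admissible}), and read off directly that $x_i+x_{i+1}\in W$ for $i=1,\ldots,g-1$; since these $g-1$ classes are linearly independent in the $(g-1)$-dimensional space $H_1^+(N_g;\mathbb{Z}_2)$, you conclude $W=H_1^+(N_g;\mathbb{Z}_2)$. Your version is purely homological: it bypasses Lemma \ref{lem:one-admissible}, the admissible $A$-circles $\gamma_j$ of Figure \ref{fig:repre-gamma}, and the change of coordinates principle entirely, at the modest cost of having to verify $[a_i]=x_i+x_{i+1}$ from the figures. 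The paper's approach, by contrast, makes the geometric content (transitivity of $\mathcal{M}(N_g)$ on admissible $A$-circles) do the work and keeps Lemma \ref{lem:one-admissible} as a natural intermediate step.
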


\begin{figure}[hbtp]
\includegraphics[height=2cm]{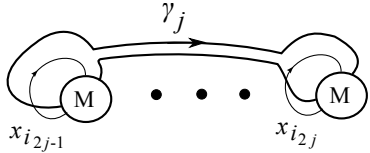}
\caption{An element $x_{i_{2j-1}}+ x_{i_{2j}}$ $ \in H_1(N_g ; \mathbb{Z}_2)$ 
is represented by an admissible $A$-circle $\gamma_i$}
\label{fig:repre-gamma}
\end{figure}

\begin{proof}
By Lemma \ref{lem:one-admissible}, we may assume $c_1$ is an admissible $A$-circle. 
For any $x \in H_1^+(N_g;\mathbb{Z}_2)$, 
we can write $x = x_{i_1} + x_{i_2} + \cdots + x_{i_{2k}}$. 
We can represent $x_{i_{2j-1}}+ x_{i_{2j}}$ by 
an admissible $A$-circle $\gamma_j$ as in Figure \ref{fig:repre-gamma}.  
Hence, $x$ is represented by a union of admissible $A$-circles, 
that is, $x = [\gamma_1]+ \cdots + [\gamma_k]$ in $H_1^+(N_g;\mathbb{Z}_2)$. 
For each $\gamma_j$, there is an element $\phi_j \in \mathcal{M}(N_g)$ 
such that $\phi_j(c_1) = \gamma_j$. 
By the assumption of this lemma, 
$\phi_j $ is a product of $t_{c_1}, \ldots, t_{c_n}$ and $Y_1, \ldots, Y_k$. 
We see that $Y_i$ acts on $H_1(N_g ; \mathbb{Z}_2)$ trivially, and, 
for each $x \in  H_1(N_g ; \mathbb{Z}_2)$, 
$(t_{c_i})_* (x) = x + (x,[c_i]) [c_i] $. 
Therefore, $[\gamma_j] \in H_1^+(N_g;\mathbb{Z}_2)$ is a sum 
of  $[c_1], \ldots, [c_n]$, hence $x$ is a sum of $[c_1], \ldots, [c_n]$. 
This shows that $H_1^+(N_g;\mathbb{Z}_2)$ is generated by 
$[c_1], \ldots, [c_n]$. 
\end{proof}

Let $2\times : \mathbb{Z}_2 \to \mathbb{Z}_4$ be an injection 
defined by $2\times([n])=[2n]$. 
A map $q : H_1(N_g;\mathbb{Z}_2) \to \mathbb{Z}_4$ is 
called a $\mathbb{Z}_4$-{\em quadratic form\/}, 
if $q(x+y) = q(x)+q(y) + 2 \times (x,y)$ for any $x, y \in H_1(N_g;\mathbb{Z}_2) $.  
This map $q$ is determined by values of $q$ for elements in a $\mathbb{Z}_2$-basis of 
$H_1(N_g;\mathbb{Z}_2)$. 
Putting $x=y=0$ in the above formula, we have $q(0)=0$. 
If $x \in H_1(N_g ; \mathbb{Z}_2)$ is represented by a one-sided simple closed curve, 
in other word $x$ is represented by a core of a M\"{o}bius band embedded in $N_g$, 
then $(x,x)=1$. 
Since $2x= 0$ in $H_1(N_g ; \mathbb{Z}_2)$, we have 
$0 = q(x + x) = q(x) + q(x) + 2 \times (x,x) = 2 q(x) + 2$. 
Therefore, we have $q(x) = \pm 1$. 
By the same argument, 
if $x \in H_1(N_g ; \mathbb{Z}_2)$ is represented by a two-sided simple closed curve, 
then we have $q(x)= 0 $ or $2$. 
\begin{lem} \label{lem:no-quadratic-form}
There is no $\mathbb{Z}_4$-quadratic form over $H_1(N_g;\mathbb{Z}_2)$ 
which is preserved by every non-trivial element of $\mathcal{M}(N_g)$. 
\end{lem}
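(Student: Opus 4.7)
The plan is to suppose, for contradiction, that such a $\mathbb{Z}_4$-quadratic form $q$ exists, and then derive a contradiction by computing $q$ on a handful of specific homology classes.

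First I would extract the basic numerical constraint from Dehn twist invariance: for any two-sided simple closed curve $c$ with $[c]\neq 0$, the Dehn twist $t_c$ is a non-trivial element of $\mathcal{M}(N_g)$, and its induced action on $H_1(N_g;\mathbb{Z}_2)$ is the transvection $x\mapsto x+(x,[c])[c]$. Choosing any $x$ with $(x,[c])=1$ and applying the quadratic identity $q(x+[c])=q(x)+q([c])+2$, the preservation of $q$ under $t_c$ forces $q([c])\equiv -2\equiv 2\pmod 4$.

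Next I would apply this constraint to $t_{a_1},\dots,t_{a_{g-1}}$. Reading Figure~\ref{fig:generator}, $[a_i]=x_i+x_{i+1}$, so with $(x_i,x_{i+1})=0$ the constraint becomes $q(x_i)+q(x_{i+1})\equiv 2\pmod 4$. Since $x_j$ is represented by a one-sided simple closed curve, $q(x_j)\in\{1,3\}$, and among the four candidate pairs only $(1,1)$ and $(3,3)$ sum to $2\pmod 4$. Chaining the equalities $q(x_i)=q(x_{i+1})$ across $i=1,\dots,g-1$ therefore yields a common value $\epsilon\in\{1,3\}$ with $q(x_i)=\epsilon$ for every $i$.

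The contradiction will come from the curve $b_2$ of Figure~\ref{fig:generator}, whose homology class is $[b_2]=x_1+x_2+x_3+x_4\neq 0$. By the first step, $q([b_2])=2$. On the other hand, iterating $q(u+v)=q(u)+q(v)+2\times (u,v)$ and using $(x_i,x_j)=0$ for $i\neq j$ gives
\[
q(x_1+x_2+x_3+x_4)=q(x_1)+q(x_2)+q(x_3)+q(x_4)=4\epsilon\equiv 0\pmod 4,
\]
contradicting $q([b_2])=2$.

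The only non-mechanical input is identifying the homology classes $[a_i]=x_i+x_{i+1}$ and $[b_2]=x_1+x_2+x_3+x_4$ from Figure~\ref{fig:generator}; once these identifications are in hand, the computation in $\mathbb{Z}_4$ is immediate. The assumption $g\geq 4$ enters only in ensuring that the four basis elements $x_1,x_2,x_3,x_4$ appearing in $[b_2]$ actually exist.
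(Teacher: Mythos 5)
Your argument is correct and is essentially the paper's proof run in the contrapositive direction: both rest on the computation that invariance of $q$ under $t_c$ forces $q([c])=2$ whenever $[c]\neq 0$, and the class $[b_2]=x_1+x_2+x_3+x_4=[a_1]+[a_3]$ at which you locate the contradiction is exactly the class the paper exhibits as satisfying $q=0$ (via $q([a_1])+q([a_3])=2+2=0$ when neither term already vanishes), whence its Dehn twist fails to preserve $q$. Your detour through the one-sided classes, showing $q(x_i)=\epsilon$ for all $i$ and hence $q([b_2])=4\epsilon=0$, is harmless but not needed.
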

\begin{proof}
For any $\mathbb{Z}_4$-quadratic form $q$ over $H_1(N_g;\mathbb{Z}_2)$, 
there is a non-trivial  element $x \in H_1^+(N_g;\mathbb{Z}_2)$ such that 
$q(x)=0$; even if $q([a_1])=q([a_3])=2$ then 
$q([a_1]+[a_3]) = q([a_1])+q([a_3]) + 2\times([a_1],[a_3]) = 0$. 
When $x = x_{i_1} + \ldots + x_{i_{2n}}$, let $y = x_{i_1}$, 
then $(y,x) = 1$. 
Let $\gamma$ be a simple closed curve on $N_g$ representing $x$ 
then $q \circ (t_{\gamma})_* (y) = q( y + (y,x) x) 
= q(y) + q((y,x)x) + 2\times(y, (y,x)x) 
= q(y) + q(x) + 2 = q(y) + 2 \not= q(y)$. 
Therefore $q \circ (t_{\gamma})_* \not= q$. 
\end{proof}
\begin{lem}\label{invariant-quadratic-form}
Let $c_1, \ldots, c_{g-1}$ be two-sided simple closed curves such that 
$[c_1], \ldots, [c_{g-1}]$ generate $H_1^+(N_g;\mathbb{Z}_2)$, 
then there is a $\mathbb{Z}_4$-quadratic form over $H_1(N_g;\mathbb{Z}_2)$ 
preserved by any $t_{c_i}$. 
\end{lem}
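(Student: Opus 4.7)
The plan is to construct $q$ by prescribing its values on a $\mathbb{Z}_2$-basis of $H_1(N_g;\mathbb{Z}_2)$ that contains $[c_1],\ldots,[c_{g-1}]$, choosing those values so that invariance under each $t_{c_i}$ follows automatically from the quadratic identity.

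First I would translate the invariance condition at a single Dehn twist into a numerical requirement on $q([c_i])$. Using $(t_{c_i})_*(x) = x + (x,[c_i])[c_i]$ and expanding via $q(a+b) = q(a)+q(b)+2\times(a,b)$,
\[
q\bigl((t_{c_i})_*(x)\bigr) - q(x) = q\bigl((x,[c_i])[c_i]\bigr) + 2\times\bigl(x,(x,[c_i])[c_i]\bigr).
\]
When $(x,[c_i]) = 0$ this vanishes; when $(x,[c_i]) = 1$ it equals $q([c_i]) + 2$. Thus $q$ is preserved by $t_{c_i}$ exactly when $q([c_i]) = 2$ in $\mathbb{Z}_4$, which is an allowed value since $c_i$ is two-sided and hence $q([c_i]) \in \{0,2\}$.

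Next I would build such a $q$. By hypothesis $[c_1],\ldots,[c_{g-1}]$ generate $H_1^+(N_g;\mathbb{Z}_2)$, and since that space has dimension $g-1$ they form a basis of it; adjoining $x_1$ (with $w_1(x_1)=1$) yields a basis of $H_1(N_g;\mathbb{Z}_2)$. Set $q([c_i]) := 2$ for every $i$ and $q(x_1) := 1$, and extend to all of $H_1(N_g;\mathbb{Z}_2)$ by
\[
q\Bigl(\sum_{j} v_{i_j}\Bigr) := \sum_{j} q(v_{i_j}) \;+\; 2\times\!\!\sum_{j<l}(v_{i_j},v_{i_l}).
\]
The parity compatibility $2 q(v) \equiv 2\times(v,v) \pmod 4$ that a $\mathbb{Z}_4$-quadratic form must satisfy on each basis vector holds by inspection, using $([c_i],[c_i]) = 0$ (since $c_i$ is two-sided) and $(x_1,x_1) = 1$. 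Given this, a routine bookkeeping argument shows that $q$ obeys $q(x+y) = q(x)+q(y)+2\times(x,y)$ on all of $H_1(N_g;\mathbb{Z}_2)$, so $q$ is a genuine $\mathbb{Z}_4$-quadratic form.

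The main obstacle, such as it is, lies in this bookkeeping for the extension; everything after that is the one-line computation of the first paragraph. I do not expect any real difficulty, because $\mathbb{Z}_4$-quadratic refinements of a fixed $\mathbb{Z}_2$-valued bilinear form are standard: any admissible assignment of values on a basis (those satisfying the parity constraint) extends uniquely, and we have arranged $q([c_i])=2$ for all $i$, so by the initial computation $q\circ(t_{c_i})_* = q$ for every $i$.
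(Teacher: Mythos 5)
Your proposal is correct and follows essentially the same route as the paper: the paper also adjoins a one-sided curve class $[\alpha]$ to $[c_1],\ldots,[c_{g-1}]$ to get a basis, sets $q([c_i])=2$ and $q([\alpha])=1$, and verifies invariance by the identical two-case computation on $(x,[c_i])$. The only difference is that you spell out the parity constraint needed for the basis-extension to exist, which the paper takes for granted.
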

\begin{proof}
Let $\alpha$ be a one-sided simple closed curve on $N_g$, 
then $\{ [c_1], \ldots, [c_{g-1}], [\alpha] \}$ is a $\mathbb{Z}_2$-basis of 
$H_1(N_g;\mathbb{Z}_2)$. 
We define a $\mathbb{Z}_4$-quadratic form $q$ over $H_1(N_g;\mathbb{Z}_2)$ 
by $q([c_1])= \cdots = q([c_{g-1}])=2$ and $q([\alpha])=1$. 
For any $i = 1, \ldots, g-1$ and $x \in H_1(N_g;\mathbb{Z}_2)$, 
we see $q \circ {(t_{c_i})_*} (x) = q(x + (x,[c_i])[c_i]) 
= q(x) + q((x,[c_i])[c_i]) + 2 \times (x,(x,[c_i])[c_i]) $. 
If $(x,[c_i])=0$, $q \circ {(t_{c_i})_*} (x) = q(x)$. 
If $(x,[c_i])=1$, $q \circ {(t_{c_i})_*} (x) = 
q(x) + q([c_i]) + 2 \times (x,[c_i]) = q(x) + 2 +2 = q(x)$. 
Therefore, $q \circ {(t_{c_i})_*} = q$ for any $i = 1, \ldots, g-1$. 
\end{proof}
We assume that Dehn twists $t_{c_1}, \ldots, t_{c_n}$  
and $Y$-homeomorphisms $Y_1, \ldots, Y_k$ generate $\mathcal{M}(N_g)$. 
In \cite{Lickorish2}, Lickorish showed that $\mathcal{M}(N_g)$ is not generated by 
Dehn twists, therefore we see $k \geq 1$. 
By Lemma \ref{lem:lowerbound}, $[c_1], \ldots, [c_n]$ generate 
$H_1^+(N_g ; \mathbb{Z}_2)$, in particular $n \geq g-1$.   
We assume that $n = g-1$. 
By Lemma \ref{invariant-quadratic-form}, there is a $\mathbb{Z}_4$-quadratic form 
over $H_1(N_g;\mathbb{Z}_2)$ preserved by Dehn twists $t_{c_1}, \ldots, t_{c_n}$ 
and $Y$-homeomorphisms $Y_1, \ldots, Y_k$, 
which contradicts Lemma \ref{lem:no-quadratic-form}. 
Hence, we see $n \geq g$. 
This completes the proof of Theorem \ref{thm:minimal}.  

\begin{rem}
The proof of Theorem \ref{thm:minimal} is inspired by the master thesis 
\cite{Ishimura} by Shigehisa Ishimura, in which he proved Humphries' result by using 
$\mathbb{Z}_2$-quadratic form over $H_1(\Sigma_g;\mathbb{Z}_2)$. 
\end{rem}
\subsection*{Acknowledgments}
The author would like to thank Mustafa Korkmaz and Genki Omori for their useful comments. 


\begin{thebibliography}{99}
%
%
\bibitem{BC}{\bf J.S.~Birman and D. R. J. ~Chillingworth}, 
{\it On the homeotopy group of a non-orientable surface\/}, 
Math.\ Proc.\ Camb.\ Phil.\ Soc.\ 71 (1972),\ 437--448. 
Erratum: Math.\ Proc.\ Camb.\ Phil.\ Soc.\ 136 (2004),\ 441--441.
%
\bibitem{Chillingworth}{\bf D. R. J. ~Chillingworth},
{\it A finite set of generators for the homeotopy group of
a non-orientable surface\/},
Proc.\ Camb.\ Phil.\ Soc.\ 65 (1969),\ 409--430
%
\bibitem{Farb-Margalit}{\bf B.~Farb and D.~Margalit}, 
A primer on mapping class groups. 
Princeton Mathematical Series, 49. Princeton University Press, Princeton, NJ, 2012. 
%
\bibitem{Humphries}{\bf S. P. ~Humphries}, 
{\it Generators for the mapping class group\/},  
In: Topology of low-dimensional manifolds (Proc. Second Sussex Conf., Chelwood Gate, 1977), 
44--47, Lecture Notes in Math., 722, Springer, Berlin, 1979.
%
\bibitem{Ishimura}{\bf S. ~Ishimura}, 
{\it The spin structures over surfaces and the number of generators for 
the mapping class group\/}, (Japanese) Master Thesis, Osaka City University,  2004. 
%
\bibitem{Lickorish} {\bf W.B.R. ~Lickorish}, 
{\it A finite set of generators for the homeotopy group of a 2-manifold\/},  
Proc.\ Cambridge\ Philos.\ Soc. 60(1964),\ 769--778, 
Erratum: Proc.\ Cambridge\ Philos.\ Soc. 62(1966),\ 679--681. 
%
\bibitem{Lickorish1}{\bf W.B.R. ~Lickorish}, 
{\it Homeomorphisms of non-orientable two-manifolds\/}, 
Proc.\ Cambridge\ Philos.\ Soc. 59(1963),\ 307--317.
%
\bibitem{Lickorish2}{\bf W.B.R. ~Lickorish}, 
{\it On the homeomorphisms of a non-orientable surface\/}, 
Proc.\ Cambridge\ Philos.\ Soc. 61(1965),\ 61--64. 
%
\bibitem{Szepietowski1}{\bf B. ~Szepietowski}, 
{\it A finite generating set for the level 2 
mapping class group of a nonorientable surface\/}, 
Kodai\ Math.\ J.\ 36(2013),\ 1--14
%
\end{thebibliography}
\end{document}